\newcommand{\reals}{\mathbb{R}}
\newcommand{\paraa}[1]{\big(#1\big)}
\newcommand{\parab}[1]{\Big(#1\Big)}
\newcommand{\parac}[1]{\bigg(#1\bigg)}
\newcommand{\sgn}{\operatorname{sgn}}
\newcommand{\spacearound}[1]{\quad#1\quad}
\newcommand{\equivalent}{\spacearound{\Leftrightarrow}}
\newtheorem{theorem}{Theorem}[section]
\newtheorem{lemma}[theorem]{Lemma}
\newtheorem{proposition}[theorem]{Proposition}
\newtheorem{example}[theorem]{Example}
\newtheorem{remark}[theorem]{Remark}
\theoremstyle{definition}
\numberwithin{equation}{section}
\newcommand{\Tr}{\operatorname{Tr}}
\newcommand{\B}{\mathcal{B}}
\renewcommand{\P}{\mathcal{P}}
\newcommand{\C}{\mathcal{C}}
\newcommand{\xv}{\vec{x}}
\renewcommand{\d}{\partial}
\newcommand{\TSigma}{T\Sigma}
\newcommand{\eps}{\varepsilon}
\newcommand{\nablab}{\bar{\nabla}}
\newcommand{\grad}{\operatorname{grad}}
\newcommand{\Gammab}{\bar{\Gamma}}
\newcommand{\Rb}{\bar{R}}
\newcommand{\av}{\vec{a}}
\newcommand{\cv}{\vec{c}}
\renewcommand{\mid}{\mathds{1}}
\newcommand{\D}{\mathcal{D}}
\newcommand{\nablah}{\hat{\nabla}}
\newcommand{\nablat}{\widetilde{\nabla}}
\renewcommand{\div}{\operatorname{div}}
\newcommand{\Ric}{\operatorname{Ric}}
\title[]{Pseudo-Riemannian geometry in\\ terms of multi-linear
  brackets}
\author{Joakim Arnlind}
\address[Joakim Arnlind]{Dept. of Math.\\
Link\"oping University\\
581 83 Link\"oping\\
Sweden}
\email{joakim.arnlind@liu.se}
\author{Gerhard Huisken}
\address[Gerhard Huisken]{Mathematisches Forschungsinstitut Oberwolfach\\
Schwarzwaldstr. 9-11\\
77709 Oberwolfach-Walke\\
Germany}
\email{huisken@mfo.de}
\thanks{}
\subjclass[2000]{}
\keywords{}
\begin{document}

\begin{abstract}
  We show that the pseudo-Riemannian geometry of submanifolds can be
  formulated in terms of higher order multi-linear maps. In
  particular, we obtain a Poisson bracket formulation of almost
  (para-)K\"ahler geometry.
\end{abstract}

\maketitle

\section{Introduction}

\noindent In a series of papers, the possibility of expressing
differential geometry of Riemannian submanifolds as multi bracket algebraic
expressions in the function algebra has been investigated
\cite{ahh:discretegb,ahh:nsurface,ahh:psurface,ahh:nambudiscrete}.
More precisely, it was shown that on a $n$-dimensional submanifold
$\Sigma$, geometric objects can be written in terms of a $n$-ary
alternating multi-linear map acting on the embedding functions.  One
of the original motivations for studying the problem came from matrix
regularizations of surfaces in the context of ``Membrane Theory''
(cp. \cite{h:phdthesis}), where smooth functions are mapped to
hermitian matrices such that the Poisson bracket of functions
correspond to the commutator of matrices (as the matrix dimension
becomes large).  In this context, matrices corresponding to the
embedding coordinates of a surface arise as solutions to equations,
which contain matrices associated to surfaces of arbitrary genus. In
order to identify the topology of a solution, it is desirable to be
able to compute geometric invariants in terms of the embedding
matrices and their commutators. This was illustrated in
\cite{ahh:discretegb} where formulas for the discrete scalar curvature
and the discrete genus were presented (see also
\cite{a:curvatureGeometricModule}). Although matrix regularizations
provided the original motivation for our work, it is interesting to
ask similar questions in the context of general quantizations and
non-commutative geometry.

For higher dimensional manifolds, however, one is required to
formulate geometry in terms of a $n$-ary bracket, which has no direct
analogue as a higher order commutator for operators. This leads to the
question if there is perhaps a class of manifolds (of dimension
greater than two) for which one may use a Poisson bracket to express
geometric quantities. In the following we will demonstrate that
almost (para-)K\"ahler manifolds provide a context where an affirmative
answer can be given (cp. \cite{ah:kahlerpb} for a preliminary
version). In the course of doing so, we shall also consider
pseudo-Riemannian manifolds and extend the results
of \cite{ahh:nambudiscrete} to more general types of multi-linear
brackets and manifolds of indefinite signature.

\section{Preliminaries}\label{sec:preliminaries}

\noindent Let $(M,\eta)$ be a pseudo-Riemannian manifold of dimension
$m$, and let $(\Sigma,g)$ be a $n$-dimensional submanifold of $M$ with
induced metric $g$. Given local coordinates $x^1,\ldots,x^m$ on $M$,
we consider $\Sigma$ as embedded in $M$ via $x^i(u^1,\ldots,u^n)$
where $u^1,\ldots,u^n$ are local coordinates on $\Sigma$. Indices
$i,j,k,\ldots$ run from $1$ to $m$ and indices $a,b,c,\ldots$ run from
$1$ to $n$. The Levi-Civita connection on $M$ is denoted by $\nablab$
(with Christoffel symbols $\Gammab^i_{jk}$) and the Levi-Civita
connection on $\Sigma$ by $\nabla$ (with Christoffel symbols
$\Gamma^a_{bc}$).  The tangent space $\TSigma$ is regarded as a
subspace of the tangent space $TM$ and at each point of $\Sigma$ one
can choose $e_a=(\d_ax^i)\d_i$ as basis vectors of $\TSigma$, and in
this basis we define $g_{ab}=\eta(e_a,e_b)$.

The formulas of Gauss and Weingarten split the covariant derivative in
$M$ into tangential and normal components as
\begin{align}
  &\nablab_X Y = \nabla_X Y + \alpha(X,Y)\label{eq:GaussFormula}\\
  &\nablab_XN = -W_N(X) + D_XN\label{eq:WeingartenFormula}
\end{align}
where $X,Y\in \TSigma$, $N\in\TSigma^\perp$ and $\nabla_X Y$,
$W_N(X)\in\TSigma$ and $\alpha(X,Y)$, $D_XN\in\TSigma^\perp$. It
follows that $\alpha(X,Y)=\alpha(Y,X)$ and 
\begin{align*}
  \eta\paraa{\alpha(X,Y),N} = \eta\paraa{W_N(X),Y}
\end{align*}
for $N\in\TSigma^\perp$ (Weingarten's equation). From these formulas,
one can derive Gauss' equation, which expresses the curvature of the
submanifold in terms of the curvature of the ambient manifold and the
second fundamental form $\alpha$:
\begin{equation}\label{eq:GaussEquation}
  \begin{split}
    g\paraa{R(X,Y)Z,V} =
    \eta&\paraa{\Rb(X,Y)Z,V}-\eta\paraa{\alpha(X,Z),\alpha(Y,V)}\\
    &+\eta\paraa{\alpha(Y,Z),\alpha(X,V)},    
  \end{split}
\end{equation}
for $X,Y,Z,V\in\TSigma$, where $\Rb$ and $R$ denote the curvature
tensors of $M$ and $\Sigma$ respectively.  For more details on
submanifolds, please see e.g.
\cite{kn:foundationsDiffGeometryI,kn:foundationsDiffGeometryII}.

\section{$(N+1)$-bracket formulation of pseudo-Riemannian geometry}

\noindent In the previous section, we introduced $\Sigma$ as a
submanifold of $(M,\eta)$, embedded via the coordinates
$x^1,\ldots,x^m$, and equipped with the induced metric $g$. Let us now
assume that there exists a $(N+1)$-multilinear map
\begin{align*}
  \{\cdot,\ldots,\cdot\}: \underbrace{C^{\infty}(\Sigma)\times\cdots\times
  C^{\infty}(\Sigma)}_{N+1}\to C^{\infty}(\Sigma)
\end{align*}
compatible with the usual associative product in the following
way\footnote{Note that we have not assumed antisymmetry of the
  bracket; although our examples are of this kind, it is not necessary
  to develop the theory.}
\begin{align*}
  \{f_1,\ldots,f_kg,\ldots,f_{N+1}\}
  =f_k\{f_1,\ldots,g,\ldots,f_{N+1}\}
  +g\{f_1,\ldots,f_{N+1}\},
\end{align*}
for $k=1,\ldots,N+1$. Furthermore, we introduce multi-indices
$I=(i_1i_2\cdots i_N)$, $\av=(a_1\cdots a_N)$ and set
\begin{align*}
  &\{f,\xv^I\} = \{f,x^{i_1},x^{i_2},\ldots,x^{i_{N}}\}\\
  &\d_{\av}\xv^I = \paraa{\d_{a_1}x^{i_1}}\paraa{\d_{a_2}x^{i_2}}\cdots\paraa{\d_{a_{N}}x^{i_{N}}}\\
  &\eta_{IJ} = \eta_{i_1j_1}\eta_{i_2j_2}\cdots\eta_{i_{N}j_{N}}\\
  &g_{\av\cv}= g_{a_1c_1}g_{a_2c_2}\cdots g_{a_{N}c_{N}},
\end{align*}
as well as
\begin{align*}
  \P^{iI} = \frac{1}{\sqrt{N!}}\{x^i,\xv^I\}.
\end{align*}
Since we seek to formulate the metric geometry of $(\Sigma,g)$ with
the help of the above bracket, one needs to assume a relation to the
metric $g$. Hence, we will in the following assume that there exist
$0<\gamma\in C^{\infty}(\Sigma)$ and $\epsilon\in\{-1,1\}$ such that
\begin{align}\label{eq:PgDef}
  \P^{iI}\P^{jJ}\eta_{IJ} = \epsilon\gamma^2g^{ab}(\d_ax^i)(\d_bx^j),
\end{align}
where $g^{ab}$ are the components of the inverse of the metric
$g$. Introducing a multi-vector $\theta$ such that 
\begin{align*}
  \{f,f_1,\ldots,f_N\} = \theta^{a\av}(\d_af)(\d_{a_1}f_1)\cdots(\d_{a_N}f_N)
\end{align*}
equation \eqref{eq:PgDef} can be written as
\begin{align}\label{eq:gthetathetaRelation}
  \epsilon\gamma^2g^{ac} = \frac{1}{N!}\theta^{a\av}\theta^{c\cv}g_{\av\cv}.
\end{align}
(note that this kind of compatibility condition, and the relation to
Riemannian geometry has also been studied in the context of matrix
models \cite{bs:curvatureMatrix}). If desired, relation
\eqref{eq:PgDef} can be put in a slightly more algebraic form as
\begin{align*}
  \P^{iI}\eta_{IJ}\P^{jJ}\eta_{jk}\P^{k}(f_1,\ldots,f_N)
  = \epsilon\gamma^2\P^i(f_1,\ldots,f_N)
\end{align*}
for all $f_1,\ldots,f_N\in C^\infty(\Sigma)$, where
\begin{align*}
  \P^i(f_1,\ldots,f_N) = \frac{1}{\sqrt{N!}}\{x^i,f_1,\ldots,f_N\}.
\end{align*}
Although relation \eqref{eq:PgDef} might look unnatural at first
sight, let us point out a number of situations in which it holds true.

\begin{example}[Pseudo-Riemannian manifolds]\label{ex:pseudoRiemannian}
  Let $(\Sigma,g)$ be a pseudo-Riemannian manifold of dimension $n$ and set
  \begin{align*}
    \{f_1,\ldots,f_n\}=\frac{1}{\rho}\eps^{a_1\cdots a_n}
    (\d_{a_1}f_1)\cdots(\d_{a_n}f_n),
  \end{align*}
  (giving $N+1=n$) where $\rho$ is an arbitrary density. Then one computes
  \begin{align*}
    \P^{iI}\P^{jJ}\eta_{IJ} &= 
    \frac{1}{\rho^2(n-1)!}\eps^{aa_1\cdots a_{n-1}}\eps^{cc_1\cdots c_{n-1}}
    (\d_{a}x^i)(\d_cx^j)(\d_{a_1}x^{i_1})\eta_{i_1j_1}(\d_{c_1}x^{j_1})\\
    &\qquad\times\cdots\times(\d_{a_{n-1}}x^{i_{n-1}})\eta_{i_{n-1}j_{n-1}}(\d_{c_{n-1}}x^{j_{n-1}})\\
    &=\frac{1}{\rho^2(n-1)!}\eps^{aa_1\cdots a_{n-1}}\eps^{cc_1\cdots c_{n-1}}
    (\d_{a}x^i)(\d_cx^j)g_{a_1c_1}\cdots g_{a_{n-1}c_{n-1}}\\
    &=\frac{g}{\rho^2}g^{ac}(\d_{a}x^i)(\d_cx^j),
  \end{align*}
  since the next to last expression is simply the cofactor expansion
  of the matrix corresponding to the inverse of the metric $g$. Thus,
  \eqref{eq:PgDef} is fulfilled with $\epsilon=\sgn(g)$ and
  $\gamma=\sqrt{|g|}/\rho$, where $g$ denotes the determinant of the
  metric. In particular, one may use a Poisson bracket to describe the
  geometry of a 2-dimensional manifold of arbitrary signature. Note
  that the geometry of pseudo-Riemannian surfaces in terms of Poisson
  brackets was worked out in \cite{h:poissonpseudoRiemannian},
  following the work previously done in \cite{ahh:psurface}.
\end{example}

\begin{example}[Almost K\"ahler manifolds]\label{ex:kahler}
  Let $(\Sigma,g,J)$ be an almost K\"ahler manifold with the
  associated K\"ahler form
  \begin{align*}
    \omega(X,Y) = g(X,J(Y)),
  \end{align*}
  and Poisson bivector $\theta$ as the inverse of $\omega$ (where $J$
  denotes the almost complex structure).
  That is, in this setting one has $N=1$, and 
  \begin{align*}
    \{f_1,f_2\} = \theta^{ab}(\d_af_1)(\d_bf_2).
  \end{align*}
  On an almost K\"ahler manifold it holds that ${J^a}_b=-\theta^{ac}g_{cb}$,
  and $J^2=-\mid$ gives
  \begin{align}\label{eq:kahlergthetatheta}
    g^{ab} = \theta^{ap}\theta^{bq}g_{pq},
  \end{align}
  which implies that equation \eqref{eq:gthetathetaRelation} is
  satisfied with $\epsilon=\gamma=1$.
\end{example}

\begin{example}[Indefinite K\"ahler manifolds]\label{ex:indefiniteKahler}
  An indefinite (almost) K\"ahler manifold is an (almost) K\"ahler
  manifold where the metric is not necessarily positive
  definite. Since the complex structure preserves the causal type of
  vectors (spacelike, timelike or null), any subspace of vectors of a
  fixed causality is left invariant by the complex
  structure. Therefore, the index of $g$ (i.e. the dimension of the
  largest subspace on which $g$ is negative definite) has to be an
  even integer; that is, the signature of $g$ is of the form
  $(2s,2n-2s)$ for $0\leq s\leq n$.  In this case, equation
  \eqref{eq:kahlergthetatheta} still holds, which implies that
  \eqref{eq:gthetathetaRelation} is satisfied with
  $\epsilon=\gamma=1$, as in the case of ordinary K\"ahler
  manifolds. (Please see \cite{br:indefiniteKahler} for more
  information on indefinite K\"ahler manifolds.)
\end{example}

\begin{example}[Para-K\"ahler manifolds]\label{ex:paraKahler}
  An almost para-hermitian manifold $(\Sigma,g,J)$ is a
  pseudo-Riemannian manifold $(\Sigma,g)$ together with a map
  $J:\TSigma\to\TSigma$ such that $J^2=\mid$ and
  $g(X,Y)=-g(J(X),J(Y))$. If the associated K\"ahler form
  $\omega(X,Y)=g(X,JY)$ is closed, then $(\Sigma,g,J)$ is called an
  almost para-K\"ahler manifold. Since $J$ is invertible and maps a
  vector of negative norm to a vector of positive norm, the signature
  of $g$ has to be of the form $(n,n)$ (see
  e.g. \cite{cfg:surveyParacomplex} for more details). In this case,
  one derives that the Poisson bivector $\theta$ is given by
  $\theta^{ab}={J^a}_cg^{cb}$, and $J^2=\mid$ implies that
  \begin{align*}
    g^{ab} = -\theta^{ap}\theta^{bq}g_{pq}. 
  \end{align*}
  Thus, \eqref{eq:gthetathetaRelation} is fulfilled with $\epsilon=-1$
  and $\gamma=1$.
\end{example}

\begin{example}
  In the above examples, we have considered (para-)K\"ahler manifolds,
  which implies that $\gamma=1$. Let us note that manifolds fulfilling
  \eqref{eq:gthetathetaRelation}, with $\gamma\neq 1$, are related to
  (para-)K\"ahler manifolds by rescaling the metric. Namely, if
  $(\Sigma,g)$ is a Poisson manifold such that
  \begin{align*}
    \epsilon\gamma^2g^{ab}=\theta^{ap}\theta^{bq}g_{pq},
  \end{align*}
  then $\Sigma$ is a (para-)K\"ahler manifold with respect to the
  rescaled metric $\tilde{g}=\gamma^{-1}g$ and the (para-)complex
  structure ${J^a}_b=\epsilon\gamma^{-1}\theta^{ac}g_{cb}$.
\end{example}

\begin{remark}
  Note that \eqref{eq:gthetathetaRelation} provides a natural (at
  least in this context) generalization of K\"ahler manifolds to
  $(N+1)$-brackets. Namely, just as (in the case $\gamma=1$)
  \eqref{eq:kahlergthetatheta} expresses the fact that
  $\theta^{ac}g_{cb}$ squares to $-\delta^a_b$, equation
  \eqref{eq:gthetathetaRelation} tells us that the ''square'' of the
  multivector $\theta$ fulfills
  \begin{align*}
    \theta^{a\av}\theta_{b\av} = \delta^a_b,
  \end{align*}
  giving a relation between $\theta$ and the metric $g$ on $\Sigma$.
\end{remark}

\noindent The right hand side of equation \eqref{eq:PgDef} is more or less the
projection operator from $TM$ to $\TSigma$. Therefore, one
introduces 
\begin{align*}
  \D^{ij} = \frac{\epsilon}{\gamma^2}\P^{iI}\P^{jJ}\eta_{IJ}
  =g^{ab}(\d_ax^i)(\d_bx^j),
\end{align*}
and sets $\D(X)=\D^{ij}\eta_{jk}X^k\d_i$ for $X=X^i\d_i\in TM$;
moreover, one notes that $\D$ is symmetric, i.e. $\D^{ij}=\D^{ji}$.
The factor $\epsilon\gamma^2$ is not independent of the
bracket, and can be computed from it via
\begin{align}\label{eq:PPtraceGamma}
  \frac{\epsilon}{n}\P^{iI}\P^{jJ}\eta_{IJ}\eta_{ij} 
  = \frac{\epsilon^2\gamma^2}{n}g^{ab}g_{ab} = \gamma^2.
\end{align}

\begin{proposition}
  The map $\D:TM\to TM$ is the orthogonal projection onto $\TSigma$.
\end{proposition}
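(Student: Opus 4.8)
The plan is to verify the two defining properties of an orthogonal projection onto $\TSigma$: first, that $\D$ maps $TM$ into $\TSigma$ and acts as the identity on $\TSigma$; and second, that $\D$ is self-adjoint with respect to $\eta$ (equivalently, that its kernel is the orthogonal complement $\TSigma^\perp$). Since it is already noted in the excerpt that $\D^{ij}=\D^{ji}$, the symmetry statement $\eta(\D(X),Y)=\eta(X,\D(Y))$ is immediate from $\eta(\D(X),Y)=\D^{ij}\eta_{jk}X^k\eta_{i\ell}Y^\ell$, so the main content is showing that $\D$ is idempotent with the correct image.

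First I would establish that $\D$ acts as the identity on the tangent space $\TSigma$. The basis vectors are $e_a=(\d_ax^i)\d_i$, so it suffices to compute $\D(e_a)$. Using $\D^{ij}=g^{cd}(\d_cx^i)(\d_dx^j)$, one finds
\begin{align*}
  \D^{ij}\eta_{jk}(\d_ax^k) = g^{cd}(\d_cx^i)(\d_dx^j)\eta_{jk}(\d_ax^k)
  = g^{cd}(\d_cx^i)g_{da} = (\d_cx^i)\delta^c_a = \d_ax^i,
\end{align*}
where I used $\eta_{jk}(\d_dx^j)(\d_ax^k)=\eta(e_d,e_a)=g_{da}$ together with $g^{cd}g_{da}=\delta^c_a$. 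Hence $\D(e_a)=(\d_ax^i)\d_i=e_a$, so $\D$ restricts to the identity on $\TSigma$.

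Next I would show that $\D$ annihilates the normal space $\TSigma^\perp$. If $N=N^i\d_i\in\TSigma^\perp$, then by definition $\eta(N,e_d)=N^i\eta_{ij}(\d_dx^j)=0$ for every $d$. Applying $\D$ gives
\begin{align*}
  \D(N) = \D^{ij}\eta_{jk}N^k\d_i = g^{cd}(\d_cx^i)(\d_dx^j)\eta_{jk}N^k\d_i
  = g^{cd}(\d_cx^i)\,\eta(N,e_d)\,\d_i = 0.
\end{align*}
Since $TM=\TSigma\oplus\TSigma^\perp$ as a pseudo-Riemannian direct sum (which is where nondegeneracy of $g$ on $\TSigma$ is used), these two computations determine $\D$ completely: it is the identity on $\TSigma$ and zero on $\TSigma^\perp$, which is exactly the orthogonal projection. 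Idempotence $\D^2=\D$ then follows, and together with the already-noted symmetry this identifies $\D$ as the orthogonal projection.

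I do not expect a serious obstacle here; the argument is a direct index computation once the right contraction $\eta_{jk}(\d_dx^j)(\d_ax^k)=g_{da}$ is recognized. The only subtlety worth flagging is that the splitting $TM=\TSigma\oplus\TSigma^\perp$ requires $g$ to be nondegenerate on $\TSigma$, which is assumed in the pseudo-Riemannian setting; without it the normal space could intersect the tangent space and ``orthogonal projection'' would not be well defined. Given that assumption, verifying the identity-on-tangent and zero-on-normal properties is enough, since a linear map that is the identity on a subspace and zero on its complement is automatically the orthogonal projection onto that subspace.
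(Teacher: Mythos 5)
Your proof is correct, and it takes a slightly different route than the paper's. The paper verifies three things by direct index computation: symmetry of $\D$ with respect to $\eta$, idempotence $(\D^2)^{ij}=\D^{ij}$ (using the identity $(\d_bx^k)\eta_{kl}(\d_px^l)=g_{bp}$, i.e.\ essentially equation \eqref{eq:PgDef}), and $\D(X)=X$ for $X\in\TSigma$; together these exhibit $\D$ as a symmetric idempotent whose image is $\TSigma$. You instead compute $\D(e_a)=e_a$ and $\D(N)=0$ for $N\in\TSigma^\perp$, and then invoke the splitting $TM=\TSigma\oplus\TSigma^\perp$ to conclude that $\D$ coincides with the orthogonal projection, obtaining idempotence as a corollary rather than verifying it directly. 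The two computations you perform are the same kind of contraction the paper uses (recognizing $\eta_{jk}(\d_dx^j)(\d_ax^k)=g_{da}$), so the technical core is shared; the difference is in how the verification is organized. Your organization has the merit of making the geometric content transparent and of flagging explicitly where nondegeneracy of the induced metric $g$ enters (it is needed both for $g^{ab}$ to exist and for the direct sum decomposition to hold, which in the pseudo-Riemannian setting is a genuine hypothesis, not automatic). The paper's organization has the merit of being entirely computational and self-contained: it never appeals to the decomposition of $TM$, only to algebraic identities among the $\D^{ij}$, which is in keeping with the paper's general program of encoding submanifold geometry in bracket-algebraic relations.
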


\begin{proof}
  One easily sees that $\D$ is symmetric with respect to $\eta$
  (giving an \emph{orthogonal} projection); namely, one computes
  \begin{align*}
    \eta(X,\D(Y)) = \eta_{ij}X^i\D^{jk}\eta_{kl}Y^l = 
    \eta_{kl}\D^{kj}\eta_{ji}X^iY^l = \eta(\D(X),Y)
  \end{align*}
  since $\D^{jk}=\D^{kj}$. Moreover, using equation \eqref{eq:PgDef}
  one computes that
  \begin{align*}
    (\D^2)^{ij} &= \D^{ik}\eta_{kl}\D^{lj}
    =g^{ab}(\d_ax^i)(\d_bx^k)\eta_{kl}g^{pq}(\d_px^l)(\d_qx^j)\\
    &=g^{ab}g^{pq}g_{bp}(\d_ax^i)(\d_qx^j) = g^{aq}(\d_ax^i)(\d_qx^j)
    = \D^{ij},
  \end{align*}
  which shows that $\D$ is indeed projection operator. Now, let us choose
  $X\in\TSigma$ and write $X=X^a(\d_ax^i)\d_i$. One then computes
  \begin{align*}
    \D(X) &= g^{ab}(\d_ax^i)(\d_bx^j)\eta_{jk}X^c(\d_cx^k)\d_i
    =g^{ab}g_{bc}(\d_ax^i)X^c\d_i = X,
  \end{align*}
  showing that $\D$ is indeed the projection onto $\TSigma$.
\end{proof}

\noindent It is also convenient to introduce the projection $\Pi$ onto the
complementary space $\TSigma^\perp$; i.e we set $\Pi=\mid-\D$. Having
the projection at hand, one immediately obtains the
Levi-Civita connection of $\Sigma$ as
\begin{align*}
  \nabla_XY^i = \D\paraa{\nablab_XY}^i
  = {\D^i}_j\paraa{X^k\d_k(Y^j)+\Gammab^{j}_{kl}X^kY^l}
\end{align*}
for $X,Y\in\TSigma$. However, the above formula has an explicit
derivative appearing in it, and can not be completely written in terms
of $(N+1)$-brackets. Therefore, it is convenient to introduce
\begin{align*}
  \nablah_XY = \nablab_{\D(X)}Y,
\end{align*}
for which it holds that $\nablah_XY = \nablab_XY$, whenever
$X\in\TSigma$, and
\begin{align}\label{eq:nablahDef}
  \nablah_jY^i = \D_j(Y^i) + {\D_j}^l\Gammab^i_{lk}Y^k 
\end{align}
with
\begin{align*}
  \D^i(f) = \D^{ij}(\d_jf) =  
  \frac{\epsilon}{\gamma^2N!}\{x^i,\xv^I\}\{f,\xv^J\}\eta_{IJ}.
\end{align*}
Note that equation \eqref{eq:nablahDef} is written entirely in terms
of $(N+1)$-brackets:
\begin{align*}
  \nablah^jY^i =
  \frac{\epsilon}{\gamma^2N!}\{Y^i,\xv^I\}\{x^j,\xv^J\}\eta_{IJ}
  +\frac{\epsilon}{\gamma^2N!}\{x^j,\xv^I\}\{x^l,\xv^J\}\eta_{IJ}
  \Gammab^{i}_{lk}Y^k
\end{align*}
Thus, the Levi-Civita connection on $\Sigma$ may also be written in
terms of $(N+1)$-brackets as
\begin{align}\label{eq:LeviCivitaSigma}
  \nabla_jY^i = {\D^i}_k\nablah_jY^k.
\end{align}

\noindent Let us proceed to show that $\nablah$ is (not surprisingly)
closely related to the curvature of $(M,\eta)$. To start with, let us
collect a few computations related to the second fundamental form in
the following lemma:
\begin{lemma}\label{lemma:secondFundamentalForm}
  For $X,Y,Z,V\in\TSigma$ it holds that
  \begin{align}
    &\alpha(X,Y)^i = -\paraa{\nablah_j{\Pi^i}_k}X^jY^k\label{eq:secfundExpression}\\
    &\eta\paraa{\alpha(X,Y),\alpha(Z,V)}=
    \paraa{\nablah_i\Pi_{mj}}\paraa{\nablah_k{\Pi^m}_l}X^iY^jZ^kV^l.
  \end{align}
  In particular, since $\alpha(X,Y)=\alpha(Y,X)$ it follows from
  \eqref{eq:secfundExpression} that 
  \begin{align}\label{eq:nablahDsymmetry}
    X^iY^j\paraa{\nablah_i\D_{jk}-\nablah_j\D_{ik}} = 0.
  \end{align}
\end{lemma}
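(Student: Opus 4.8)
The plan is to prove the three statements by systematically using the Gauss and Weingarten formulas together with the fact that $\D$ and $\Pi=\mid-\D$ are orthogonal projections onto $\TSigma$ and $\TSigma^\perp$ respectively. The key conceptual point is that $\nablah_X Y = \nablab_{\D(X)} Y$ agrees with the ambient connection $\nablab_X Y$ when $X\in\TSigma$, so for tangent vector fields $\nablah$ is simply $\nablab$. Since $\alpha(X,Y)=\Pi(\nablab_X Y)$ by the Gauss formula \eqref{eq:GaussFormula}, the second fundamental form is the normal part of the ambient derivative, and I expect the first identity \eqref{eq:secfundExpression} to follow from differentiating the projection.

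First I would establish \eqref{eq:secfundExpression}. Starting from $\alpha(X,Y)=\Pi(\nablab_X Y)$ for $X,Y\in\TSigma$, I would use that $\D(Y)=Y$ on tangent fields to write $Y^i = {\D^i}_k Y^k$ and then differentiate. Applying $\Pi$ and using the idempotency relations $\Pi\D=0$ (equivalently $\Pi(Y)=0$ for $Y\in\TSigma$), the terms involving $\nablab_X Y$ acting through $\D$ will either vanish or combine. The crucial manipulation is the product-rule expansion
\begin{align*}
  \Pi^i_{\ \ell}\,X^j\nablah_j(\D^\ell_{\ k} Y^k)
  = \Pi^i_{\ \ell}\,X^j(\nablah_j\D^\ell_{\ k})Y^k
  + \Pi^i_{\ \ell}\D^\ell_{\ k}\,X^j\nablah_j Y^k,
\end{align*}
where the last term vanishes because $\Pi\D=0$. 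Since $\nablah_j\Pi^i_{\ k} = -\nablah_j\D^i_{\ k}$ (as $\Pi+\D=\mid$ is covariantly constant in the ambient sense), and since $\Pi$ applied to the remaining piece acts as the identity on the normal component, I expect the signs and projections to collapse to exactly $-(\nablah_j\Pi^i_{\ k})X^j Y^k$. The main obstacle here is bookkeeping: making sure that $\nablah$ genuinely computes the ambient covariant derivative of the tensor fields $\Pi$ and $\D$ (i.e. that $\nablah_j\D^i_{\ k}$ means $\D_j(\D^i_{\ k})+\D^{\ell}_j\Gammab^i_{\ell m}\D^m_{\ k}-\cdots$ with the correct index placements), and that applying the projection $\Pi$ on the left correctly kills the tangential contributions.

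Next, for the second identity I would simply substitute \eqref{eq:secfundExpression} into $\eta(\alpha(X,Y),\alpha(Z,V))$, obtaining
\begin{align*}
  \eta\paraa{\alpha(X,Y),\alpha(Z,V)}
  = \eta_{im}(\nablah_j\Pi^i_{\ k})(\nablah_\ell\Pi^m_{\ p})
    X^j Y^k Z^\ell V^p,
\end{align*}
and then reindex to match the stated form $(\nablah_i\Pi_{mj})(\nablah_k\Pi^m_{\ l})X^iY^jZ^kV^l$, raising and lowering with $\eta$. This step is essentially algebraic once the first identity is in hand.

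Finally, to obtain \eqref{eq:nablahDsymmetry} I would invoke the symmetry $\alpha(X,Y)=\alpha(Y,X)$, which holds by the remark following \eqref{eq:WeingartenFormula}. Antisymmetrizing \eqref{eq:secfundExpression} in $X$ and $Y$ gives $X^iY^j(\nablah_i\Pi_{jk}-\nablah_j\Pi_{ik})=0$ after lowering an index with $\eta$; since $\Pi=\mid-\D$ and the constant $\mid$ drops out under the difference (the $\nablah$ of the identity being symmetric, or simply canceling), this is equivalent to $X^iY^j(\nablah_i\D_{jk}-\nablah_j\D_{ik})=0$. The only care needed is the index lowering, replacing $\Pi^i_{\ k}$ by $\Pi_{jk}$ via $\eta$, which is routine.
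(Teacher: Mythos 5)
Your proposal is correct and takes essentially the same approach as the paper: Gauss' formula $\alpha(X,Y)=\Pi(\nablab_XY)$, the observation that $\nablah_XY=\nablab_XY$ for $X\in\TSigma$, and the Leibniz rule applied to the projection identities (the paper differentiates $\Pi(Y)=0$ directly, while you differentiate $Y=\D(Y)$ and then strip the leading $\Pi$, a step that closes with one more Leibniz application to ${\Pi^i}_\ell{\D^\ell}_k=0$). The derivations of the second identity (substitution) and of \eqref{eq:nablahDsymmetry} (symmetry of $\alpha$ plus $\Pi=\mid-\D$) coincide with the paper's.
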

\begin{proof}
  For
  $X,Y\in\TSigma$, the second fundamental form is given by
  \begin{align*}
    \alpha(X,Y)^i = \Pi\paraa{\nablab_XY}^i = \Pi\paraa{\nablah_XY}^i
    ={\Pi^i}_kX^l\nablah_lY^k
    =-X^lY^k\nablah_l{\Pi^i}_k
  \end{align*}
  since $\Pi(Y)=0$. The second formula follows immediately from this
  result. The last formula is proved by inserting $\Pi=\mid-\D$ into
  equation \eqref{eq:secfundExpression} and using that
  $\alpha(X,Y)=\alpha(Y,X)$.
\end{proof}

\noindent The next results confirms that the commutator of $\nablah_i$
and $\nablah_i$ does indeed give the curvature of $(M,\eta)$:

\begin{proposition}
  Let $\Rb$ be the curvature tensor of $(M,\eta)$. For $X,Y\in\TSigma$
  and $U\in TM$ it holds that
  \begin{align*}
    X^iY^j\paraa{\nablah_i\nablah_jU^k-\nablah_j\nablah_iU^k} = 
    \Rb(X,Y)U^k
  \end{align*}
\end{proposition}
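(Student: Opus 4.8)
The plan is to compute $X^iY^j(\nablah_i\nablah_j U^k - \nablah_j\nablah_i U^k)$ directly from the definition \eqref{eq:nablahDef} and show it collapses to the ambient curvature $\Rb(X,Y)U^k$. Since $\nablah_X Y = \nablab_{\D(X)}Y$ and $\D(X)=X$ for tangential $X$, one expects $\nablah_i\nablah_j U^k$ to unpack as $\nablab_i\nablab_j U^k$ plus correction terms involving derivatives of the projection $\D$ (equivalently $\Pi$), and the antisymmetrization in $i,j$ should kill precisely those corrections, leaving the ambient curvature.

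First I would write out $\nablah_j U^k = \D_j(U^k) + {\D_j}^l\Gammab^k_{lm}U^m$ and apply $\nablah_i$ to this again, treating $\nablah_j U^k$ as a new collection of components indexed by $(j,k)$ and being careful about how $\nablah_i$ acts on the free lower index $j$ (that index is a $\TSigma$-index and should be differentiated covariantly, so a term with a Christoffel symbol $\Gamma$ or equivalently a $\nablah\D$ factor will appear). The key structural point is that $\D_i = {\D_i}^p\d_p$, so $\nablah_i = {\D_i}^p\nablab_p$ up to the projection acting on indices; substituting this, $\nablah_i\nablah_j U^k$ becomes ${\D_i}^p{\D_j}^q\nablab_p\nablab_q U^k$ plus terms where a derivative lands on one of the projection factors, i.e. terms containing $\nablah_i{\D_j}^q$ or $(\nablah_i\Pi)$.

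The crucial step is the antisymmetrization. Contracting against $X^iY^j$ with $X,Y\in\TSigma$, the leading term gives $X^pY^q(\nablab_p\nablab_q - \nablab_q\nablab_p)U^k = \Rb(X,Y)U^k$ by definition of the ambient curvature tensor (here I use that $\D(X)=X$, $\D(Y)=Y$ so the projection factors become identities on the tangential vectors). The correction terms all involve factors of the form $X^iY^j(\nablah_i\D_{j\bullet} - \nablah_j\D_{i\bullet})$ or $X^iY^j(\nablah_i\Pi - \nablah_j\Pi)$-type expressions, and these vanish by the symmetry relation \eqref{eq:nablahDsymmetry} established in Lemma~\ref{lemma:secondFundamentalForm}, which says exactly that $X^iY^j(\nablah_i\D_{jk} - \nablah_j\D_{ik}) = 0$. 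The main obstacle will be bookkeeping: correctly identifying every place where a $\nablah$ derivative hits a projection factor, grouping those into antisymmetric combinations of the form controlled by \eqref{eq:nablahDsymmetry}, and confirming that nothing survives except the ambient curvature. A careful use of $\Pi = \mid - \D$ lets one trade $\nablah\D$ for $-\nablah\Pi$ and apply the symmetry identity uniformly, so once the terms are organized the cancellation is clean.
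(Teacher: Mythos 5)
Your proposal is correct and follows essentially the same route as the paper's proof: expanding $\nablah_i = {\D_i}^l\nablab_l$, extracting the leading term $X^iY^j[\nablab_i,\nablab_j]U^k = \Rb(X,Y)U^k$ using $\D(X)=X$, $\D(Y)=Y$, and killing the correction terms $X^iY^j\paraa{\nablah_i({\D_j}^m)-\nablah_j({\D_i}^m)}\nablab_mU^k$ via the symmetry relation \eqref{eq:nablahDsymmetry} from Lemma \ref{lemma:secondFundamentalForm}. Your worry about covariantly differentiating the free lower index $j$ (with a $\Gamma$ or $\Gammab$ term) is harmless: any such Christoffel contribution is symmetric in the pair of contracted indices and cancels under the antisymmetrization, which is why the paper's computation never needs it.
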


\begin{proof}
  From the definition of $\nablah$ one obtains
  \begin{align*}
    X^iY^j&\paraa{\nablah_i\nablah_jU^k-\nablah_j\nablah_iU^k}
    = X^iY^j\paraa{{\D_i}^l\nablab_l({\D_j}^m\nablab_mU^k)
      -{\D_j}^l\nablab_l({\D_i}^m\nablab_mU^k)}\\
    &= X^iY^j\paraa{[\nablab_i,\nablab_j]U^k
      +\nablah_i({\D_j}^m)\nablab_mU^k-\nablah_j({\D_i}^m)\nablab_mU^k}\\
    &= X^iY^j[\nablab_i,\nablab_j]U^k = \Rb(X,Y)U^k,
  \end{align*}
  by using equation \eqref{eq:nablahDsymmetry} in Lemma
  \ref{lemma:secondFundamentalForm}.
\end{proof}

\noindent By using Gauss' equation \eqref{eq:GaussEquation} and Lemma
\ref{lemma:secondFundamentalForm}, we proceed to show that the
curvature of $(\Sigma,g)$ can be expressed in terms of $\nablah$ and
the projection $\Pi$.

\begin{proposition}
  Let $\Rb$ and $R$ be the curvature tensors of $(M,\eta)$ and
  $(\Sigma,g)$ respectively. For $X,Y,Z,V\in\TSigma$ it holds that
  \begin{align*}
    R(X,Y,Z,V) = \parab{\Rb_{ijkl}+(\nablah_k{\Pi_{mi}})(\nablah_l{\Pi^m}_j)-
    (\nablah_k{\Pi_{mj}})(\nablah_l{\Pi^m}_i)}X^iY^jZ^kV^l.
  \end{align*}
\end{proposition}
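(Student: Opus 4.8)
The plan is to combine Gauss' equation \eqref{eq:GaussEquation} with the expression for the second fundamental form obtained in Lemma \ref{lemma:secondFundamentalForm}. Gauss' equation already expresses $R(X,Y,Z,V)=g\paraa{R(X,Y)Z,V}$ in terms of the ambient curvature $\Rb$ and two inner products of the form $\eta\paraa{\alpha(\cdot,\cdot),\alpha(\cdot,\cdot)}$, so the only work is to rewrite those two second-fundamental-form terms using $\nablah$ and $\Pi$. This is precisely what the second formula of Lemma \ref{lemma:secondFundamentalForm} delivers.

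Concretely, I would start by writing Gauss' equation \eqref{eq:GaussEquation} with the arguments matched to the desired index structure, namely
\begin{align*}
  R(X,Y,Z,V) = \eta\paraa{\Rb(X,Y)Z,V}
  -\eta\paraa{\alpha(X,Z),\alpha(Y,V)}
  +\eta\paraa{\alpha(Y,Z),\alpha(X,V)}.
\end{align*}
The first term is simply $\Rb_{ijkl}X^iY^jZ^kV^l$ once one lowers the appropriate index of the ambient curvature tensor with $\eta$. For the remaining two terms I would apply the second identity of Lemma \ref{lemma:secondFundamentalForm}, which states that $\eta\paraa{\alpha(X,Y),\alpha(Z,V)}=\paraa{\nablah_i\Pi_{mj}}\paraa{\nablah_k{\Pi^m}_l}X^iY^jZ^kV^l$. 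Substituting the correct arguments into this formula gives
\begin{align*}
  \eta\paraa{\alpha(X,Z),\alpha(Y,V)}
  = \paraa{\nablah_i\Pi_{mk}}\paraa{\nablah_j{\Pi^m}_l}X^iZ^kY^jV^l,
\end{align*}
and similarly for $\eta\paraa{\alpha(Y,Z),\alpha(X,V)}$ with $X$ and $Y$ interchanged.

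The step that requires care is the index bookkeeping: the contraction variables $X,Y,Z,V$ must be paired with the free indices $i,j,k,l$ consistently so that, after relabeling the dummy summation indices, the two second-fundamental-form contributions collapse into the stated antisymmetrized combination $(\nablah_k{\Pi_{mi}})(\nablah_l{\Pi^m}_j)-(\nablah_k{\Pi_{mj}})(\nablah_l{\Pi^m}_i)$. Here one uses the symmetry $\Pi_{mj}=\Pi_{jm}$ (inherited from the symmetry $\D^{ij}=\D^{ji}$, hence $\Pi^{ij}=\Pi^{ji}$) together with a careful renaming of the summed index $m$. I expect this relabeling to be the only genuine obstacle, and it is purely combinatorial rather than conceptual; everything else is direct substitution. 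Collecting the three pieces and factoring out $X^iY^jZ^kV^l$ then yields the claimed formula.
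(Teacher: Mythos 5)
Your strategy --- substituting the second identity of Lemma \ref{lemma:secondFundamentalForm} into Gauss' equation \eqref{eq:GaussEquation} --- is precisely the paper's proof, but the step you dismiss as ``purely combinatorial'' is where your argument breaks. The Lemma puts the derivative index of each factor $\nablah\Pi$ on the \emph{first} argument of the corresponding $\alpha$, so your substitution yields $\eta\paraa{\alpha(X,Z),\alpha(Y,V)}=\paraa{\nablah_i\Pi_{mk}}\paraa{\nablah_j{\Pi^m}_l}X^iY^jZ^kV^l$, with the derivatives contracted against $X$ and $Y$; in the proposition the derivative indices are $k,l$, contracted against $Z$ and $V$. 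No renaming of bound indices can move a derivative from the $X$-slot to the $Z$-slot, and the symmetry $\Pi_{mj}=\Pi_{jm}$ you invoke exchanges the two indices of $\Pi$ itself, not the derivative index with one of $\Pi$'s indices. The ingredient you actually need is the symmetry of the second fundamental form, $\alpha(X,Z)=\alpha(Z,X)$, i.e.\ equation \eqref{eq:nablahDsymmetry}: swap the arguments inside each $\alpha$ \emph{before} applying the Lemma, so that the derivative indices land on the $Z,V$ slots.

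There is also a sign problem created by your conventions. With your identifications $R(X,Y,Z,V)=g\paraa{R(X,Y)Z,V}$ and $\Rb_{ijkl}X^iY^jZ^kV^l=\eta\paraa{\Rb(X,Y)Z,V}$, Gauss' equation contributes $-\eta\paraa{\alpha(X,Z),\alpha(Y,V)}+\eta\paraa{\alpha(Y,Z),\alpha(X,V)}$, i.e.\ the \emph{negatives} of the two quadratic terms in the statement; carried through consistently, your derivation would end with the opposite signs. The proposition is written in the convention of Kobayashi--Nomizu (the paper's reference for submanifold theory), $R(X,Y,Z,V)=g\paraa{R(Z,V)Y,X}$, equivalently $R_{ijkl}=\eta\paraa{R(\d_k,\d_l)\d_j,\d_i}$. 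If you instead evaluate Gauss' equation on the arguments $(Z,V,Y,X)$ and then apply Lemma \ref{lemma:secondFundamentalForm} with first arguments $Z$ and $V$, both the signs and the placement of the derivative indices come out exactly as stated, and no extra symmetry manipulation is needed at all.
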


\begin{proof}
  The formula for the curvature of $(\Sigma,g)$ is obtained by
  inserting the expression for the second fundamental form, found in
  equation \eqref{eq:secfundExpression} in Lemma
  \ref{lemma:secondFundamentalForm}, into Gauss' equation
  \eqref{eq:GaussEquation}.
\end{proof}

\noindent To derive formulas for the scalar and Ricci curvatures one
notes that the trace of $T:\TSigma\times \TSigma\to C^\infty(\Sigma)$
may be computed as
\begin{align*}
  \Tr T = g^{ab}T(e_a,e_b) = g^{ab}T_{ij}(e_a)^i(e_b)^j
  = g^{ab}(\d_ax^i)(\d_bx^j)T_{ij}=\D^{ij}T_{ij}.
\end{align*}
Thus, when computing the trace over $\TSigma$, one may effectively use
$\D^{ij}$ instead of $\eta^{ij}$ (which corresponds to the trace in
$TM$). In this way, one immediately obtains formulas for the scalar and Ricci
curvatures:
\begin{proposition}
  Let $\Rb$ denote the curvature tensor of $(M,\eta)$, and let $\Ric$
  and $S$ denote the Ricci and scalar curvatures of $(\Sigma,g)$,
  respectively. For $X,Y\in\TSigma$ it holds that
  \begin{align*}
    \Ric(X,Y) &= \parab{\D^{kl}\Rb_{kilj}
      +(\nablah_k\Pi^{kl})(\nablah_j\Pi_{il})
      -(\nablah_k\Pi_{li})(\nablah_j\Pi^{lk})
    }X^iY^j\\
    S &= \D^{ij}\D^{kl}\Rb_{kilj}
    +(\nablah^k\Pi_{kl})(\nablah_i\Pi^{il})
    -(\nablah_k\Pi_{il})(\nablah^i\Pi^{kl}).
  \end{align*}
\end{proposition}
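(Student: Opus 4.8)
The plan is to obtain both formulas by tracing the curvature expression of the previous proposition against the projection $\D$, using that for a tensor $T:\TSigma\times\TSigma\to C^\infty(\Sigma)$ the trace over $\TSigma$ is $\Tr T=\D^{ij}T_{ij}$. Concretely, I would read off $\Ric$ as the trace of $R(X,Y,Z,V)$ over its first and third arguments, i.e. $\Ric_{ij}=\D^{kl}R_{kilj}$, and I would obtain $S$ either by tracing $\Ric$ once more against $\D^{ij}$ or, equivalently, by tracing the full curvature tensor twice. Substituting the reindexed formula $R_{kilj}=\Rb_{kilj}+(\nablah_l\Pi_{mk})(\nablah_j{\Pi^m}_i)-(\nablah_l\Pi_{mi})(\nablah_j{\Pi^m}_k)$ from the preceding proposition, the $\Rb$-terms reproduce $\D^{kl}\Rb_{kilj}$ (and $\D^{ij}\D^{kl}\Rb_{kilj}$ for the scalar) verbatim, so the content of the proof lies entirely in rewriting the two second-fundamental-form terms.

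The key step is to absorb the trace projection into the modified connection. Here I would use the identity $\nablah_j={\D_j}^k\nablab_k$, which is the content of \eqref{eq:nablahDef}, together with the fact that $\D$ is a projection, so that $\D^{kl}{\D_l}^p=\D^{kp}$. For instance, in the first derivative term the trace index $l$ sits on the projection defining $\nablah_l$, whence $\D^{kl}\nablah_l\Pi_{mk}=\D^{kl}{\D_l}^p\nablab_p\Pi_{mk}=\D^{kp}\nablab_p\Pi_{mk}$. Recognizing the right-hand side as $\nablah_k{\Pi^k}_m$ (using symmetry of $\Pi$ and metric compatibility of $\nablab$, which lets indices be raised and lowered freely through the derivative) turns the traced term into $(\nablah_k\Pi^{kl})(\nablah_j\Pi_{il})$, matching the stated expression. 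An entirely analogous manipulation handles the second term and yields $(\nablah_k\Pi_{li})(\nablah_j\Pi^{lk})$; for the scalar curvature one additionally uses $\D^{ij}\nablah_j=\nablah^i$ to raise the outer derivative index.

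The main obstacle I anticipate is bookkeeping rather than conceptual: one must be careful that $\D$ is \emph{not} parallel ($\nablab\D\neq0$ in general), so the trace projection cannot simply be commuted through a covariant derivative. The simplification works only because each trace $\D$ needs to be combined with a projection index that already sits adjacent to it inside $\nablah$ (via $\D^2=\D$), and never has to cross a $\nablab$. Keeping track of which $\D$ belongs to the trace and which to the definition of $\nablah$, and distinguishing raising and lowering with $\eta$ from the symmetry of $\Pi$, is the only delicate part; once the two derivative terms are matched in this way, the displayed formulas for $\Ric$ and $S$ follow immediately.
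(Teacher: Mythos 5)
Your proposal is correct and takes essentially the same route as the paper: the paper offers no separate proof, presenting the formulas as what "one immediately obtains" by tracing the curvature expression of the preceding proposition with $\D^{ij}$ via $\Tr T = \D^{ij}T_{ij}$, which is precisely your strategy. Your explicit handling of the absorption $\D^{kl}\nablah_l = \nablah^k$ through $\D^2 = \D$, together with raising/lowering via metric compatibility and the symmetry of $\Pi$, correctly fills in the index bookkeeping that the paper leaves implicit.
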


\noindent Gauss' equation relates the curvature in the tangential direction
(i.e. along the submanifold) to the curvature of the ambient
space. Let us now study curvature in the normal directions; for this
reason, we introduce
\begin{align*}
  \B_N^{ij} = -\nablah^iN^j
  =-\frac{\epsilon}{\gamma^2N!}\{x^i,\xv^I\}\{N^j,\xv^J\}\eta_{IJ}
  -\frac{\epsilon}{\gamma^2N!}\{x^i,\xv^I\}\{x^k,\xv^J\}\eta_{IJ}\Gammab^{j}_{kl}N^l,
\end{align*}
for any $N=N^i\d_i\in\TSigma^\perp$, and note that this is again an
expression in terms of $(N+1)$-brackets. From this definition we
induce a mapping $\B_N:TM\to TM$ by setting
$\B_N(X)=\B_N^{ij}\eta_{jk}X^k\d_i$. It is clear from the definition
that $\B_N(X)\in\TSigma$ for any $X\in TM$ and
$N\in\TSigma^\perp$. Furthermore, it turns out that $\B_N$ captures
the tangential and normal components of $\nablab_XN$ in the following
way:

\begin{proposition}\label{prop:BWeingarten}
  For $X\in\TSigma$ and $N,\tilde{N}\in\TSigma^\perp$ it holds that 
  \begin{align}
    &\B_N(X) = W_N(X)\label{eq:BWrelation}\\
    &\eta\paraa{\B_N(\tilde{N}),X}=-\eta\paraa{D_XN,\tilde{N}},\label{eq:BDRelation}
  \end{align}
  where $W_N(X)$ and $D_XN$ denote the tangential and normal
  components of $\nablab_XN$ respectively
  (cp. equation \eqref{eq:WeingartenFormula}).
\end{proposition}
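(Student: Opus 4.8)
The plan is to express $\B_N$ through the ambient Levi-Civita connection $\nablab$ differentiated only along directions tangent to $\Sigma$, and then read off its tangential and normal content using the Gauss and Weingarten formulas together with the orthogonality $\eta(N,e_c)=0$. I would first observe that, by exactly the computation that produced \eqref{eq:nablahDef}, one has $\nablah^j Y^i = \D^{jl}\nablab_l Y^i$, and since $\D^{jl}=g^{ab}(\d_a x^j)(\d_b x^l)$ this equals $g^{ab}(\d_a x^j)(\nablab_{e_b}Y)^i$ with $e_b=(\d_b x^l)\d_i$. Applying this to $\B_N^{ij}=-\nablah^i N^j$ gives
\[
  \B_N(X)^i = \B_N^{ij}\eta_{jk}X^k = -g^{ab}(\d_a x^i)\,\eta\paraa{\nablab_{e_b}N,X},
\]
which already shows that $\B_N(X)\in\TSigma$. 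Thus the whole proposition reduces to understanding the pairing $\eta(\nablab_{e_b}N,\cdot)$.

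For \eqref{eq:BWrelation}, take $X=X^c e_c\in\TSigma$. Since $\eta(N,e_c)=0$ on $\Sigma$ and $e_b$ is tangent, I would differentiate this relation along $e_b$ and use metric compatibility to obtain $\eta(\nablab_{e_b}N,e_c)=-\eta(N,\nablab_{e_b}e_c)$; the Gauss formula \eqref{eq:GaussFormula} then replaces $\nablab_{e_b}e_c$ by its normal part $\alpha(e_b,e_c)$, and Weingarten's equation gives $\eta(N,\alpha(e_b,e_c))=\eta(W_N(e_b),e_c)$. Substituting back into the displayed formula, using that $W_N$ is $g$-self-adjoint (again Weingarten's equation combined with $\alpha(X,Y)=\alpha(Y,X)$), and finally collapsing the metric factors via $g^{ab}g_{bc}=\delta^a_c$, the expression reassembles to $W_N(X)^i$.

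For \eqref{eq:BDRelation}, I would instead pair $\B_N(\tilde N)$ against $X\in\TSigma$. Contracting the displayed formula for $\B_N(\tilde N)^i$ with $\eta_{ik}X^k$, the tangential factors collapse via $g^{ab}(\d_a x^i)\eta_{ik}X^k=g^{ab}\eta(e_a,X)=X^b$, yielding $\eta(\B_N(\tilde N),X)=-X^b\eta(\nablab_{e_b}N,\tilde N)=-\eta(\nablab_X N,\tilde N)$. The Weingarten formula \eqref{eq:WeingartenFormula} splits $\nablab_X N=-W_N(X)+D_XN$; since $\tilde N\in\TSigma^\perp$ is orthogonal to the tangential term $W_N(X)$, only the normal part survives, giving $-\eta(D_XN,\tilde N)$.

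The one step that requires care -- and which I regard as the crux -- is the passage from the bracket expression for $\B_N$ to $\nablab_{e_b}N$: the contraction with $\D$ is precisely what restricts the differentiation of $N$ to directions tangent to $\Sigma$, and this is essential because $N$ is only defined along $\Sigma$, so the relation $\eta(N,e_c)=0$ may legitimately be differentiated only in tangent directions. Once this reduction is in place, both identities follow from the standard Gauss--Weingarten decomposition, and nothing beyond routine index bookkeeping remains.
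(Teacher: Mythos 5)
Your proof is correct and follows essentially the same route as the paper's: both reduce $\B_N(X)^i$ to $-g^{ab}(\d_ax^i)\eta\paraa{\nablab_{e_b}N,X}$, invoke the symmetry coming from $\alpha(X,Y)=\alpha(Y,X)$ together with Weingarten's equation to swap the derivative direction with the contracted slot, and obtain \eqref{eq:BDRelation} by the same direct contraction $X_i\nablah^i=\nablab_X$ followed by orthogonality of $W_N(X)$ and $\tilde{N}$. The only cosmetic difference is that you re-derive Weingarten's equation by differentiating $\eta(N,e_c)=0$ and phrase the swap as self-adjointness of $W_N$, whereas the paper states the equivalent symmetry $\eta\paraa{\nablab_XN,Y}=\eta\paraa{\nablab_YN,X}$ up front and projects at the end.
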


\begin{proof}
  Let us first note that, due to the symmetry of $\alpha$, it holds
  that
  \begin{align*}
    &\eta\paraa{\alpha(X,Y),N} = \eta\paraa{\alpha(Y,X),N}\equivalent
    \eta\paraa{W_N(X),Y} = \eta\paraa{W_N(Y),X}\\
    &\equivalent\eta\paraa{\nablab_XN,Y}=\eta\paraa{\nablab_YN,X},
  \end{align*}
  for $X,Y\in\TSigma$ and $N\in\TSigma^\perp$. Using this, one
  computes that
  \begin{align*}
    \B_N(X)^i &= -\D^{ik}(\nablab_kN^j)X_j
    =-g^{ab}(\d_ax^i)(\d_bx^k)\paraa{\nablab_kN^j}(\d_cx^l)\eta_{jl}X^c\\
    &=-g^{ab}(\d_ax^i)\eta\paraa{\nablab_{e_b}N,e_c}X^c
    =-g^{ab}(\d_ax^i)\eta\paraa{\nablab_{e_c}N,e_b}X^c\\
    &= -g^{ab}(\d_ax^i)\eta\paraa{\nablab_{X}N,e_b}
    =-\D\paraa{\nablab_XN}^i = W_N(X)^i.
  \end{align*}
  Moreover, for $N,\tilde{N}\in\TSigma^\perp$ and $X\in\TSigma$ one
  obtains
  \begin{align*}
    \eta\paraa{\B_N(\tilde{N}),X} &= -X_i\paraa{\nablah^iN^j}\tilde{N}_j
    =-\eta\paraa{\nablab_XN,\tilde{N}}=
    -\eta\paraa{D_XN,\tilde{N}},
  \end{align*}
  which proves formula \eqref{eq:BDRelation}.
\end{proof}

\noindent Note that since $\eta(W_N(X),Y)=\eta(\alpha(X,Y),N)$
(Weingarten's equation), it follows from Proposition
\ref{prop:BWeingarten} that
\begin{align}\label{eq:alphaBRelation}
  \eta\paraa{\alpha(X,Y),N} = \eta\paraa{\B_N(X),Y}.
\end{align}
As a complement to Gauss' equation, the Codazzi-Mainardi equations
express the normal component of the curvature in the ambient
space. Due to the symmetries of the curvature tensor, one can
immediately write
\begin{align*}
  \eta\paraa{\Rb(X,Y)Z,N} = -\eta\paraa{\Rb(X,Y)N,Z}
  =-X^iY^jZ^l\paraa{\nablah_j\nablah_iN_l-\nablah_i\nablah_jN_l},
\end{align*}
and from the definition of $\B_N^{ij}$ one obtains
\begin{align}\label{eq:BCodazzi}
  \begin{split}
    \eta\paraa{\Rb(X,Y)Z,N}
    &=X_iY_jZ_l\paraa{\nablah^j\B_N^{il}-\nabla^i\B_N^{jl}}\\
    &=\eta\paraa{(\nablah_Y\B_N)(Z),X}-\eta\paraa{(\nablah_X\B_N)(Z),Y}.
  \end{split}
\end{align}
Let us point out that this way of writing the normal component of the
curvature has a close resemblance to an expression in terms of a
connection in $\TSigma\oplus\TSigma^\perp$, defined by combining $D_X$
and $\nabla_X$. Namely, by writing
\begin{align*}
  \paraa{\nablat_X\alpha}(Y,Z) = D_X\alpha(Y,Z)-\alpha(\nabla_XY,Z)-\alpha(Y,\nabla_XZ) 
\end{align*}
the normal component of the curvature is 
$(\nablat_X\alpha)(Y,Z)-(\nablat_Y\alpha)(X,Z)$ (see
\cite{kn:foundationsDiffGeometryII}, page 25). Let us verify
directly that they are, in fact, the same. One computes
\begin{align*}
  \eta&\paraa{(\nablah_X\B_N)(Z),Y} 
  =\eta\paraa{(\nablab_X\B_N)(Z),Y}
  =\eta\paraa{\nablab_X\B_N(Z),Y}-\eta\paraa{\B_N(\nablab_XZ),Y}\\
  &=\eta\paraa{\nabla_X\B_N(Z),Y}-\eta\paraa{\B_N(\nabla_XZ),Y}-\eta\paraa{\B_N\alpha(X,Z),Y}\\
  &=X\cdot\eta\paraa{\B_N(Z),Y}-\eta\paraa{\B_N(Z),\nabla_XY}
  -\eta\paraa{\B_N(\nabla_XZ),Y}-\eta\paraa{\B_N\alpha(X,Z),Y},
\end{align*}
and using Proposition \ref{prop:BWeingarten} together with equation
\eqref{eq:alphaBRelation} one gets
\begin{align*}
  \eta&\paraa{(\nablah_X\B_N)(Z),Y} = 
  X\cdot\eta\paraa{\alpha(Y,Z),N}-\eta\paraa{\alpha(\nabla_XY,Z),N}\\
  &\qquad\qquad-\eta\paraa{\alpha(Y,\nabla_XZ),N}
  +\eta\paraa{D_YN,\alpha(X,Z)}\\
  &= X\cdot\eta\paraa{\alpha(Y,Z),N} + Y\cdot\eta\paraa{\alpha(X,Z),N}
  -\eta\paraa{D_Y\alpha(X,Z),N}\\
  &\quad-\eta\paraa{\alpha(\nabla_XY,Z),N}-\eta\paraa{\alpha(Y,\nabla_XZ),N},
\end{align*}
which implies that 
\begin{align*}
  \eta\paraa{(\nablah_Y\B_N)(Z),X}-\eta\paraa{(\nablah_X\B_N)(Z),Y}
  =\eta\paraa{(\nablat_X\alpha)(Y,Z)-(\nablat_Y\alpha)(X,Z),N}.
\end{align*}

\section{The Laplace operator}

\noindent The gradient of a function $f\in C^\infty(\Sigma)$ may be written as
\begin{align*}
  \D^i(f)\d_i= g^{ab}(\d_af)(\d_bx^i)\d_i = g^{ab}(\d_af)e_b 
  =\nabla f,
\end{align*}
and the divergence of an element $X\in\TSigma$ as
\begin{align*}
  \nablah_iX^i &= {\D^k}_i\nablab_kX^i = g^{ab}(\d_ax^k)(\d_bx^l)\eta_{li}\nablab_kX^i
  =g^{ab}(\d_bx^l)\eta_{li}(\nablab_{e_a}X)^i\\
  &=g^{ab}(\d_bx^l)\eta_{li}\paraa{\nabla_{e_a}X+\alpha(e_a,X)}^i
  =g^{ab}(\d_bx^l)\eta_{li}\paraa{\nabla_{e_a}X}^i\\
  &=g^{ab}(\d_bx^l)\eta_{li}\paraa{\nabla_{a}X}^c(\d_cx^i)
  =g^{ab}g_{bc}\nabla_aX^c = \nabla_aX^a = \div(X).
\end{align*}
Consequently, the Laplace-Beltrami operator on $(\Sigma,g)$ may be computed as
\begin{align*}
  \Delta(f) = \div\paraa{\grad(f)} = \nablah_i\nablah^i(f),
\end{align*}
where $\nablah^i(f) = \D^i(f)$. Let us now show that one may derive a
simpler form of the Laplace operator in certain special cases. Namely,
let us first assume that the multivector $\theta$ (defining the
$(N+1)$-bracket) is completely antisymmetric, which implies that
\begin{align*}  
  \{\gamma^M\{f,\xv^I\}\eta_{IJ},\xv^J\}
  &=\frac{1}{N!}\theta^{a\av}\d_a\parab{\gamma^M\theta^{c\cv}(\d_cf)(\d_{\cv}\xv^I)\eta_{IJ}}(\d_{\av}\xv^J)\\
  =\frac{1}{N!}\theta^{a\av}\d_a&\parab{\gamma^M\theta^{c\cv}(\d_cf)(\d_{\cv}\xv^I)(\d_{\av}\xv^J)\eta_{IJ}}
  =\frac{1}{N!}\theta^{a\av}\d_a\parab{\gamma^M\theta^{c\cv}(\d_cf)g_{\av\cv}},
\end{align*}
since $\theta^{a\av}\d_a(\d_{\av}\xv^J)=0$ due to the antisymmetry of
$\theta$. Next, assuming that there exists a function $\rho\in
C^\infty(\Sigma)$ such that $\d_a(\rho\theta^{a\av})=0$ one obtains
\begin{align*}
  \{\gamma^M\{f,\xv^I\}\eta_{IJ},\xv^J\}
  =\frac{1}{\rho N!}\d_a\paraa{\rho\gamma^M\theta^{a\av}\theta^{c\cv}g_{\av\cv}\,\d_cf}
  =\frac{\epsilon}{\rho}\d_a\paraa{\rho\gamma^{M+2}g^{ac}\d_cf},
\end{align*}
by using \eqref{eq:gthetathetaRelation}. In the case of Example
\ref{ex:pseudoRiemannian}, $\theta^{a\av}=\rho^{-1}\eps^{a\av}$, and it
follows immediately that $\theta$ is completely antisymmetric and that
$\d_a(\rho\theta^{a\av})=0$. Therefore, one gets
\begin{align*}
  \{\gamma^M\{f,\xv^I\}\eta_{IJ},\xv^J\}
  =\frac{\epsilon}{\rho}\d_a\paraa{\rho\gamma^{M+2}g^{ac}\d_cf}
  =\frac{\epsilon}{\rho}\d_a\paraa{\rho^{-M-1}\sqrt{|g|}^{M+2}g^{ac}\d_cf}
\end{align*}
since $\gamma=\sqrt{|g|}/\rho$, and choosing $M=-1$ gives
\begin{align}\label{eq:nambuLaplace}
  \Delta(f) = \frac{\eps}{\gamma}\{\gamma^{-1}\{f,\xv^I\}\eta_{IJ},\xv^J\}.
\end{align}
In Examples \ref{ex:kahler}--\ref{ex:paraKahler} the $(N+1)$-bracket
is a Poisson structure, and by setting $\rho=(\sqrt{\det\theta})^{-1}$
it follows from the Jacobi identity that $\d_a(\rho\theta^{ab})=0$;
namely, by multiplying the
Jacobi identity by $\omega$, the inverse of $\theta$
(i.e. $\omega_{ab}\theta^{bc}=\delta_a^c$), one obtains
\begin{align*}
  &\omega_{ab}\paraa{\theta^{ap}\d_p\theta^{bc}+\theta^{bp}\d_p\theta^{ca}+\theta^{cp}\d_p\theta^{ab}}
  = 0\equivalent\\
  &\d_a\theta^{ac}=\frac{1}{2}\omega_{ab}(\d_p\theta^{ba})\theta^{pc} 
  =\frac{1}{\det\theta}(\d_p\det\theta)\theta^{pc},
\end{align*}
since $\d_p\det\theta = (\det\theta)\omega_{ab}\d_p\theta^{ba}$, from
which it follows that
$\d_a\paraa{\sqrt{\det\theta}^{-1}\theta^{ac}}=0$ (cp. also
\cite{bs:curvatureMatrix}). These considerations imply that
\begin{align*}
  \{\gamma^M\{f,\xv^I\}\eta_{IJ},\xv^J\}
  =\epsilon\sqrt{\theta}\d_a\paraa{\sqrt{\det\theta}^{-1}\gamma^{M+2}g^{ac}\d_cf}.
\end{align*}
Now, the determinant of the relation $\eps\gamma^2g^{ab} =
\theta^{ap}\theta^{bq}g_{pq}$, together with $\dim\Sigma$ being even,
implies that
\begin{align*}
  \sqrt{\det\theta} = \frac{\sqrt{\gamma^{n}}}{\sqrt{|g|}}.
\end{align*}
Thus, by choosing $M=(n-4)/2$, the Laplace operator may
be written as
\begin{align}\label{eq:kahlerLaplace}
  \Delta(f) = \frac{\epsilon}{\sqrt{\gamma^{n}}}\{\sqrt{\gamma^{n-4}}\{f,\xv^I\}\eta_{IJ},\xv^J\},
\end{align}
for almost (para-)K\"ahler manifolds.

\section{Explicit formulas when $(M,\eta)=(\reals^m,\delta)$}

\noindent In the course of rewriting geometry in terms of
$(N+1)$-brackets, we have developed a notation which makes the
expressions for most quantities rather short and concise. Of course,
in the general case, writing out all the brackets and Christoffel
symbols will produce formulas that are quite lengthy. However, in the
particular situation when the ambient space is $\reals^m$ equipped
with the metric $g_{ab}=\delta_{ab}$ explicit expressions are
considerably reduced in size. Apart from being simple, it is also an
interesting case since it is generic, in the sense that any manifold
can be isometrically embedded in some Euclidean space
$(\reals^m,\delta)$ (\cite{n:imbeddingProblem}), and many manifolds do
have a concrete presentation in such a way. Let us therefore, in this
case, present explicit formulas for some of the geometric objects for
which we have developed an $(N+1)$-bracket formulation. Note that for
pseudo-Riemannian manifolds a similar statement holds where one may
always isometrically embed a pseudo-Riemannian manifold into
pseudo-Euclidean space (see \cite{c:pseudoImbeddingProblem} for
details). The formulas given below can easily be extended to this
setting.

When $(M,\eta)=(\reals^m,\delta)$ there is no difference between upper
and lower indices; therefore, we shall leave all (multi)-indices in
the upper position and assume that all repeated (multi-)indices are
summed over from $1$ to $m$. Moreover, since $(\reals^m,\delta)$ is
flat, it holds that $\nablah^iX^j = \D^i(X^j)$. The factor $\gamma^2$
will be eliminated via
\begin{align*}
  \gamma^2 = \frac{\epsilon}{n}\P^{iI}\P^{iI} = \frac{\epsilon}{nN!}
  \{x^i,\xv^I\}\{x^i,\xv^I\},
\end{align*}
giving, for instance,
\begin{align*}
  \D^{ik} &= \frac{\epsilon}{\gamma^2N!}\{x^i,\xv^I\}\{x^k,\xv^I\}
  =n\frac{\{x^i,\xv^I\}\{x^k,\xv^I\}}{\{x^j,\xv^J\}\{x^j,\xv^J\}}.
\end{align*}
In the same way, one derives the following expressions:
\begin{align*}
  \nabla^{i}(f)&=n\frac{\{f,\xv^I\}\{x^i,\xv^I\}}{\{x^j,\xv^J\}\{x^j,\xv^J\}}\qquad\qquad
  \div(X)=n\frac{\{X^i,\xv^I\}\{x^i,\xv^I\}}{\{x^j,\xv^J\}\{x^j,\xv^J\}}\\
  W_N(X)^i &= -n\frac{\{N^j,\xv^i\}\{x^i,\xv^I\}}{\{x^j,\xv^J\}\{x^j,\xv^J\}}X^j\qquad
  \Delta(f) = n^2\frac{\left\{
      \frac{\{f,\xv^L\}\{x^i,\xv^L\}}{\{x^k,\xv^K\}\{x^k,\xv^K\}}
      ,\xv^I\right\}\{x^i,\xv^I\}}{\{x^j,\xv^J\}\{x^j,\xv^J\}}\\
  \nabla_XY^i&=n^2\frac{X^k\{x^k,\xv^J\}\{Y^j,\xv^J\}\{x^j,\xv^I\}\{x^i,\xv^I\}}{\paraa{\{x^l,\xv^L\}\{x^l,\xv^L\}}^2}.
\end{align*}
The scalar curvature may be written as
\begin{align*}
  S=n^4&\frac{\{x^k,\xv^I\}\left\{\frac{\{x^k,\xv^K\}\{x^l,\xv^K\}}{\{x^j,\xv^{L}\}\{x^j,\xv^{L}\}},\xv^I\right\}\{x^i,\xv^J\}\left\{\frac{\{x^i,\xv^{A}\}\{x^l,\xv^{A}\}}{\{x^{j'},\xv^{B}\}\{x^{j'},\xv^{B}\}},\xv^J\right\}}{\paraa{\{x^m,\xv^M\}\{x^m,\xv^M\}}^2}\\
  &-n^4\frac{\{x^k,\xv^I\}\left\{\frac{\{x^i,\xv^K\}\{x^l,\xv^K\}}{\{x^j,\xv^{L}\}\{x^j,\xv^{L}\}},\xv^I\right\}\{x^i,\xv^J\}\left\{\frac{\{x^k,\xv^{A}\}\{x^l,\xv^{A}\}}{\{x^{j'},\xv^{B}\}\{x^{j'},\xv^{B}\}},\xv^J\right\}}{\paraa{\{x^m,\xv^M\}\{x^m,\xv^M\}}^2},
\end{align*}
and the Codazzi-Mainardi equations become
\begin{align*}
  \parac{
    \left\{\frac{\{N^k,\xv^K\}\{x^j,\xv^K\}}{\{x^l,\xv^L\}\{x^l,\xv^L\}},\xv^I\right\}\{x^i,\xv^I\}
    -\left\{\frac{\{N^k,\xv^K\}\{x^i,\xv^K\}}{\{x^l,\xv^L\}\{x^l,\xv^L\}},\xv^I\right\}\{x^j,\xv^I\}
  }X^iY^jZ^k
  =0,
\end{align*}
for all $X,Y,Z\in\TSigma$ and $N\in\TSigma^\perp$. For almost
(para-)K\"ahler manifolds, in which case $\gamma=1$ and $N=1$, the
formulas are even more compelling:
\begin{align*}
  \nabla^{i}(f)&=\epsilon\{f,x^j\}\{x^i,x^j\}\qquad\qquad
  \div(X)=\epsilon\{X^i,x^j\}\{x^i,x^j\}\\
  W_N(X)^i &= -\epsilon\{N^j,x^k\}\{x^i,x^k\}X^j\qquad
  \Delta(f) = \left\{\{f,x^k\}\{x^i,x^k\},x^j\right\}\{x^i,x^j\}\\
  \nabla_XY^i&=X^k\{x^k,x^l\}\{Y^j,x^l\}\{x^j,x^m\}\{x^i,x^m\},
\end{align*}
and the scalar curvature becomes
\begin{align*}
  S=&\{x^k,x^{i'}\}\left\{\{x^k,x^j\}\{x^l,x^j\},x^{i'}\right\}\{x^i,x^{l'}\}\left\{\{x^i,x^{k'}\}\{x^l,x^{k'}\},x^{l'}\right\}\\
  &-\{x^k,x^{i'}\}\left\{\{x^i,x^j\}\{x^l,x^j\},x^{i'}\right\}\{x^i,x^{l'}\}\left\{\{x^k,x^{k'}\}\{x^l,x^{k'}\},x^{l'}\right\}.
\end{align*}
Let us end this section by noting that, when $N=1$, the
Codazzi-Mainardi equations are actually Poisson algebraic identities
once we assume that equation \eqref{eq:PgDef} holds. Namely,
multiplying \eqref{eq:PgDef} by $\P^{jk}$ gives $\D^{ij}\P^{jk} =
\P^{ik}$ from which it follows that $\D^{ij}\P^j(f)=\P^i(f)$ and
$\P^{ij}\D^j(f)=\P^i(f)$ where $\P^i(f)=\{x^i,f\}$. Then one readily
proves the following:

\begin{lemma}\label{lemma:Dcommutation}
  If $\D^{ij}\P^{jk}=\P^{ik}$ then it holds that
  \begin{align*}
    [\D^i,\D^j](f)\P^{ik}\P^{jl} = 0 
  \end{align*}
  for $k,l=1,\ldots,m$ and $f\in\C^\infty(\Sigma)$. 
\end{lemma}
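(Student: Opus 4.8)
The plan is to reduce everything to two Poisson-algebraic inputs: an operator identity expressing the contraction $\P^{ik}\D^i$ as (minus) the Hamiltonian vector field of $x^k$, and the Jacobi identity for the bracket. Throughout I work with the $N=1$, $\gamma=1$, flat form $\D^i(f)=\eps\P^{ij}\{f,x^j\}=-\eps\P^{ij}\P^j(f)$, together with the symmetry $\D^{ij}=\D^{ji}$, the definition $\D^{kj}=\eps\P^{kp}\P^{jp}$, and the hypothesis in its equivalent form $\D^{ij}\P^j(f)=\P^i(f)$.

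First I would record the key contraction identity
\[ \P^{ik}\D^i(f) = -\{x^k,f\}. \]
This is a one-line computation: $\P^{ik}\D^i(f)=-\eps\P^{ik}\P^{ij}\P^j(f)=-\D^{kj}\P^j(f)=-\P^k(f)=-\{x^k,f\}$, where the middle step uses antisymmetry of $\P^{ij}$ to recognise $\eps\P^{ik}\P^{ij}=\D^{kj}$, and the last step is the hypothesis. In other words, as a derivation $\P^{ik}\D^i$ equals minus the Hamiltonian vector field $\{x^k,\cdot\}$. I would then upgrade this to the master identity $\P^j(g)\D^j(f)=\{f,g\}$ for all $f,g$: for fixed $f$ both sides are derivations in $g$ (since $g\mapsto\P^j(g)=\{x^j,g\}$ and $g\mapsto\{f,g\}$ are derivations), and they agree on $g=x^k$ by the contraction identity; since the differentials $dx^1,\dots,dx^m$ span the cotangent space of $\Sigma$, the two derivations must coincide. (Equivalently, the same identity drops out of the short coordinate computation using $g_{ab}=(\d_ax^i)(\d_bx^i)$.)

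Next I would expand the commutator by Leibniz, moving each vector field $\P^{ik}\D^i$ and $\P^{jl}\D^j$ past the coefficient functions and applying the contraction identity. Writing $X_k=\{x^k,\cdot\}$, this yields
\[ \P^{ik}\P^{jl}[\D^i,\D^j](f) = [X_k,X_l](f) + \paraa{\{x^k,\P^{jl}\}-\{x^l,\P^{jk}\}}\D^j(f). \]
Two Jacobi reductions then collapse the right-hand side. On the one hand $[X_k,X_l]=X_{\{x^k,x^l\}}=X_{\P^{kl}}$, so the first term equals $\{\P^{kl},f\}$; on the other hand, Jacobi together with antisymmetry gives $\{x^k,\P^{jl}\}-\{x^l,\P^{jk}\}=\{x^j,\P^{kl}\}=\P^j(\P^{kl})$. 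Hence
\[ \P^{ik}\P^{jl}[\D^i,\D^j](f) = \{\P^{kl},f\}+\P^j(\P^{kl})\D^j(f). \]

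Finally I would apply the master identity with $g=\P^{kl}$, giving $\P^j(\P^{kl})\D^j(f)=\{f,\P^{kl}\}$, so the expression becomes $\{\P^{kl},f\}+\{f,\P^{kl}\}=0$, which is the claim. The main obstacle is the bookkeeping in the commutator expansion—tracking the Leibniz terms and their signs—and, conceptually, establishing the master identity $\P^j(g)\D^j(f)=\{f,g\}$ without circularity; this is precisely the point where the hypothesis $\D^{ij}\P^{jk}=\P^{ik}$ enters, and the derivation/spanning argument is the clean route. Once the master identity is available, the two Jacobi reductions are arranged exactly so that the surviving terms cancel.
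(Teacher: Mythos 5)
Your proposal is correct and takes essentially the same route as the paper: contract the commutator against $\P^{ik}\P^{jl}$ via the hypothesis (extended from coordinate functions to general $f$ by the derivation property), expand by Leibniz, and apply the Jacobi identity twice --- indeed your intermediate expression $[X_k,X_l](f)+\P^j(\P^{kl})\D^j(f)$ is exactly the paper's $\P^k(\P^l(f))-\P^l(\P^k(f))+\D^i(f)\P^i(\P^{kl})$. The only notable difference is that you isolate and actually prove the ``master identity'' $\P^j(g)\D^j(f)=\{f,g\}$ by the derivation/spanning argument, a step the paper invokes tacitly when it replaces $\D^i(f)\P^i(\P^{kl})$ by $\{f,\P^{kl}\}$; your restriction to $\gamma=1$ is harmless, since the factor $\epsilon/\gamma^2$ is absorbed into $\D^{ij}$ and the hypothesis, so the identical computation covers the general case.
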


\begin{proof}
  One computes
  \begin{align*}
    [\D^i,\D^j]&(f)\P^{ik}\P^{jl} =
    \parab{\D^i(\D^j(f))-\D^j(\D^i(f))}\P^{ik}\P^{jl}\\
    &=-\P^k\paraa{\D^j(f)}\P^{jl}+\P^l\paraa{\D^i(f)}\P^{ik}\\
    &=-\P^k\paraa{\P^{jl}\D^j(f)}+\P^k\paraa{\P^{jl}}\D^j(f)
    +\P^l\paraa{\P^{ik}\D^i(f)}-\P^l\paraa{\P^{ik}}\D^i(f)\\
    &= \P^k\paraa{\P^l(f)}-\P^l\paraa{\P^{k}(f)}
    +\D^i(f)\parab{\P^k(\P^{il})-\P^l(\P^{ik})}
  \end{align*}
  by using $\D^{ij}\P^{jk}=\P^{ik}$ and the fact that $\P^i$ and $\D^i$
  are derivations. Using the Jacobi identity in the last term yields
  \begin{align*}
    [\D^i,\D^j](f)\P^{ik}\P^{jl} &=
    \P^k\paraa{\P^l(f)}-\P^l\paraa{\P^{k}(f)}
    +\D^i(f)\P^i\paraa{\P^{kl}}\\
    &= \P^k\paraa{\P^l(f)}-\P^l\paraa{\P^{k}(f)}+\{f,\P^{kl}\}\\
    &=\{x^k,\{x^l,f\}\}+\{x^l,\{f,x^k\}\}+\{f,\{x^k,x^l\}\}=0,
  \end{align*}
  again by using the Jacobi identity.
\end{proof}

\noindent It follows from the above result that
$[\D^i,\D^j](f)X^iY^j=0$ for all $X,Y\in\TSigma$ and $f\in
C^\infty(\Sigma)$, which implies that the Codazzi-Mainardi equations
in $(\reals^m,\delta)$:
\begin{align*}
  X^iY^jZ^k\paraa{\D^i\D^j(N^k)-\D^j\D^i(N^k)} = 0,
\end{align*}
are satisfied when assuming that \eqref{eq:PgDef} holds.


\bibliographystyle{alpha}
\bibliography{nambu_pseudo}

\end{document}